\newcommand{\R}{\mathbb{R}}
\newtheorem{theorem}{Theorem}[section]
\newtheorem{proposition}{Proposition}[section]
\newtheorem{lemma}{Lemma}[section]
\newtheorem{corollary}{Corollary}[section]
\newcommand{\p}{\partial}
\newcommand{\bb}{\begin{equation}}
\newcommand{\ee}{\end{equation}}
\newcommand{\ba}{\begin{array}}
\newcommand{\ea}{\end{array}}
\newcommand{\f}{\frac}
\newcommand{\ds}{\displaystyle}
\numberwithin{equation}{section}
\title{A geometrical demonstration for continuation of solutions of the generalised BBM equation}
\author{
 Priscila~Leal~da Silva and Igor Leite Freire\\
  Centro de Matem\'atica, Computa\c{c}\~ao e Cogni\c{c}\~ao\\
 Universidade Federal do ABC\\
 Santo Andr\'e, Brazil \\
  \texttt{priscila.silva@ufabc.edu.br and pri.leal.silva@gmail.com} \\
  %% examples of more authors
%   \And
% Igor~Leite~Freire \\
%Centro de Matem\'atica, Computa\c{c}\~ao e Cogni\c c\~ao,\\ Universidade Federal do ABC,%\\ Avenida dos Estados, $5001$, Bairro Bangu,$09.210-580$, 
%\\Santo Andr\'e, SP - Brazil \\
  \texttt{igor.freire@ufabc.edu.br and igor.leite.freire@gmail.com}
}
\begin{document}
\maketitle

\begin{abstract}
A simple proof that if the generalised BBM equation has a solution vanishing on an open sent of its domain then the solution is necessarily zero is given. In particular, the only compactly supported solution of the equation under consideration is the identically vanishing one.
\end{abstract}

% keywords can be removed
\keywords{BBM equation \and unique continuation of solutions \and compactly supported solutions}

\section{Introduction}

In this paper we consider the Cauchy problem 
\begin{align}\label{1.1}
\left\{\ba{l}
u_t-u_{txx} + \partial_x f(u)=0,\quad t>0\,\,x\in X,\\
\\
u(0,x)=u_0(x),\quad x\in X,
\ea\right.
\end{align}
where $f$ is a differentiable function of one variable and non-identically vanishing. In \eqref{1.1} and throughout this paper, $X$ either denotes $\mathbb{R}$ (real problem) or $\mathbb{S}$ (periodic problem). %In the latter, we have the additional condition $u(t,x+1)=u(t,x)$. 
We recall that $\mathbb{S}$ here is the quotient space $\mathbb{R}/\mathbb{Z}$ obtained through the equivalence relation $x\sim y$ if and only if $x-y\in\mathbb{Z}$. Then we can also identify the circle $\mathbb{S}$ with the interval $[0,1)$.

If we take $f(u)=u+u^2/2$ in \eqref{1.1}, we recover the famous Benjamin-Bona-Mahony (BBM) equation \cite{benjamin} and for this reason we refer to the equation in \eqref{1.1} as for the generalised BBM equation.

In \cite[Theorem 4.1]{rosier} it was proved, under some conditions on the function $f$, a unique continuation property of the solutions of \eqref{1.1} for the periodic case. Based on the recent works \cite{linares,igor}, we give a different and geometrical demonstration of the same property for equation \eqref{1.1}, but differently from \cite{rosier} we deal with both real and periodic settings. Our first result is:

\begin{theorem}\label{T1.1}
Let $u_0\in H ^4(X)$ be an initial data to the Cauchy problem \eqref{1.1}, $u$ the corresponding unique solution and $T$ its lifespan. Moreover, assume that $f$ is such that it does not change sign and $f(x)=0$ if and only if $x=0$. If there exists a non-empty set $\Omega\subset[0,T)\times X$ such that $u(t,x) = 0$ for every $(t,x)\in\Omega$, then $u\equiv 0$ in $[0,T)\times X$.
\end{theorem}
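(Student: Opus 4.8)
The plan is to use the nonlocal form of the equation to reduce the statement to an \emph{overdetermined} linear ODE in the space variable, and then to exploit the sign hypothesis on $f$. Write \eqref{1.1} as $u_t=-\Lambda^{-2}\partial_x f(u)$ with $\Lambda^2:=1-\partial_x^2$, i.e.\ $u_t-u_{txx}=-\partial_x f(u)$ on $[0,T)\times X$. A non-empty open subset of $[0,T)\times X$ contains a box $J\times I$, with $J$ a time-interval and $I\subset X$ an open interval (if $I=X$ there is nothing to prove, so take $I$ proper). Since $u\equiv0$ on $J\times I$, all its derivatives vanish there; so, fixing $t\in J$ and setting $h:=u_t(t,\cdot)$, $F:=f(u(t,\cdot))$, we get $h\equiv0$ on $I$ and $F\equiv0$ on $\overline I$ (because $f(0)=0$ and $f$ is continuous). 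Moreover $F$ has a fixed sign — say $F\ge0$, replacing $f$ by $-f$ if needed — with $F(x)=0$ exactly when $u(t,x)=0$; and $h$ is bounded and of class $C^1$, since $\partial_x f(u(t,\cdot))\in L^2(X)$ and $\Lambda^{-2}$ maps $L^2(X)$ into $H^2(X)\subset C^1_b(X)$. Restricting the equation to the instant $t$ yields the ODE
\begin{equation}\label{E:ode}
h''-h=F'\qquad\text{on }X,
\end{equation}
and it suffices to deduce $F\equiv0$ from \eqref{E:ode} together with $h|_I=0$ and $F|_{\overline I}=0$, for then $u(t,\cdot)\equiv0$ for every $t\in J$.

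The key step is to treat \eqref{E:ode} as an initial-value problem issuing from an endpoint $c$ of $I$ into the adjacent component $K$ of $X\setminus I$ — a half-line when $X=\mathbb R$, the complementary arc when $X=\mathbb S$. Since $h$ vanishes on the open interval $I$ we have $h(c)=h'(c)=0$, so on $K$ the function $h$ is the unique solution of \eqref{E:ode} with these Cauchy data, namely $h(x)=\int_c^x\sinh(x-s)\,F'(s)\,ds$; integrating by parts and using $F(c)=0$,
\begin{equation}\label{E:rep}
h(x)=\int_c^x\cosh(x-s)\,F(s)\,ds\qquad(x\in K).
\end{equation}
As $F\ge0$ and $\cosh>0$, the right-hand side of \eqref{E:rep} is nonnegative, and in the real case it dominates $\tfrac12 e^{x}\int_c^x e^{-s}F(s)\,ds$, which grows without bound as $x\to+\infty$ unless $F\equiv0$ on $K$ — impossible since $h$ is bounded. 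In the periodic case the far endpoint of the arc $K$ is the other endpoint of $I$, where $h=0$ as well; evaluating \eqref{E:rep} there forces $\int_K\cosh(\cdot-s)F(s)\,ds=0$, hence $F\equiv0$ on $K$. Either way $F\equiv0$ on $X$ (on $\mathbb R$ one argues symmetrically at the other endpoint of $I$), and since $f$ vanishes only at $0$ we conclude $u(t,\cdot)\equiv0$ for all $t\in J$.

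To finish, pick $t_\ast\in J$; then $u(t_\ast,\cdot)=0$. Since $v\mapsto-\Lambda^{-2}\partial_x f(v)$ is locally Lipschitz on the relevant Sobolev space, the Cauchy problem for \eqref{1.1} is uniquely solvable, forward and backward in time, from any initial instant; as the zero function solves it with the same data at $t_\ast$, uniqueness gives $u\equiv0$ on $[0,T)\times X$. (The compactly supported case in the abstract is the special instance $\Omega\supset J\times\{|x|>M\}$.) I expect the only genuine difficulty to be the passage from the nonlocal identity $u_t=-\Lambda^{-2}\partial_x f(u)$ to the overdetermined ODE \eqref{E:ode}--\eqref{E:rep} and the careful use of the sign of $f$; the regularity of $h$, the uniform handling of $\mathbb R$ and $\mathbb S$, and the backward uniqueness are routine.
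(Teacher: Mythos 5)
Your proof is correct, and in its central step it takes a genuinely different route from the paper. The paper works with the single function $F(x)=(1-\partial_x^2)^{-1}\partial_x f(u)(t_0,x)$, observes that $F=-u_t$ vanishes on the segment $[a,b]$ where $u$ vanishes, and then computes $0=F(b)-F(a)=\int_a^b(\Lambda^{-2}f(u))\,dx$ using $\partial_x^2\Lambda^{-2}=\Lambda^{-2}-1$; since $\Lambda^{-2}$ is convolution with the strictly positive kernel $g$ and $f(u)$ has a sign, the vanishing of this integral forces $f(u(t_0,\cdot))\equiv 0$ on all of $X$ in one stroke, with no case distinction between $\mathbb{R}$ and $\mathbb{S}$. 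You instead solve the second-order ODE $h''-h=F'$ as a Cauchy problem launched from an endpoint of the vanishing interval, integrate by parts to get the positive $\cosh$ kernel, and then argue by exponential growth versus boundedness on $\mathbb{R}$ and by matching the boundary value at the far end of the arc on $\mathbb{S}$; this is a legitimate and self-contained alternative (both arguments ultimately rest on the positivity of the Green's function of $1-\partial_x^2$, but yours never needs the global positivity of the convolution at a point, only the representation formula on the complementary component). The endgames also differ: the paper propagates $u(t_0,\cdot)\equiv 0$ to all times via conservation of the $H^1$ norm (Theorem \ref{teo3.2} and Corollary \ref{cor3.2}), whereas you invoke forward \emph{and backward} uniqueness for the abstract ODE $u_t=-\Lambda^{-2}\partial_x f(u)$ in $H^4$; that is justified here because the right-hand side is locally Lipschitz on $H^s$ (the paper's Theorem \ref{teo2.1}), but the conservation-law route avoids having to argue backward in time at all. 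One shared caveat: like the paper's own proof, you must read the hypothesis as requiring $\Omega$ to be \emph{open} (as the abstract and the paper's Section \ref{sec4} do), since the statement is false for an arbitrary non-empty set.
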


Theorem \ref{T1.1} has a simple but strong implication for both cases. Indeed, given an initial data $u_0\in H^4(X)$, if the corresponding solution $u$, associated to a lifespan $T$, is compactly supported, then there exists a non-empty open set $\Omega\subset [0,T)\times X$ such that $u\Big|_{\Omega}=0$. Therefore, Theorem \ref{T1.1} says that $u\equiv 0$ in $[0,T)\times X$, which yields the following result.

\begin{theorem}\label{T1.2}
Given $u_0\in H^4(X)$, let $u$ be the unique solution with lifespan $T$. If $f$ does not change sign, $x=0$ is the only solution of $f(x)=0$ and $u$ is compactly supported, then $u\equiv 0$ in $[0,T)\times X$.
\end{theorem}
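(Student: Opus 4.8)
The plan is to reduce Theorem \ref{T1.2} directly to Theorem \ref{T1.1}, so most of the work will be unpacking why a compactly supported solution vanishes on an open set. First I would recall that the solution $u$ lives in $H^4(X)$ in the spatial variable for each $t\in[0,T)$, so in particular $u(t,\cdot)$ is continuous. In the real case $X=\mathbb{R}$, saying $u$ is compactly supported means there is a fixed $R>0$ (or at least, for each $t$, a compact set) with $u(t,x)=0$ for $|x|>R$; I would want to be slightly careful here about whether ``compactly supported'' means uniformly in $t$ or pointwise in $t$, but in either reading there is at least one time slice $t_0$ for which $u(t_0,\cdot)$ vanishes on a nonempty open interval $\{|x|>R\}$. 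In the periodic case $X=\mathbb{S}$ one argues the same way once one knows $u(t_0,\cdot)\not\equiv 1$-periodic-nonzero; actually on $\mathbb{S}$ compact support forces $u$ to vanish on a nonempty open arc directly, since its support is a proper compact subset of the circle.

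Next I would set $\Omega = \{t_0\}\times \{x\in X : u(t_0,x)=0,\ x \text{ interior to the zero set}\}$, or more robustly take $\Omega$ to be a genuinely two-dimensional open set: by continuity of $u$ on $[0,T)\times X$ (which follows from $u\in C([0,T);H^4(X))$ and the Sobolev embedding $H^4\hookrightarrow C^3$), the set $\{(t,x): u(t,x)=0\}$ contains the relatively open set $[0,T)\times(\mathbb{R}\setminus[-R,R])$ in the real case, which is nonempty and open in $[0,T)\times X$. Thus the hypothesis of Theorem \ref{T1.1} — existence of a nonempty $\Omega\subset[0,T)\times X$ on which $u\equiv 0$ — is satisfied. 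Theorem \ref{T1.1} then immediately gives $u\equiv 0$ on $[0,T)\times X$, which is the claim.

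The main obstacle, such as it is, is purely a matter of bookkeeping about the meaning of ``compactly supported'' in the periodic setting and about the regularity needed to talk about an open zero set; there is no real analytic difficulty since Theorem \ref{T1.1} does the heavy lifting. I would therefore expect the proof to be only a few lines: state that compact support yields a nonempty open $\Omega$ with $u|_\Omega = 0$, invoke Theorem \ref{T1.1}, and conclude. If one wanted to be fully rigorous about the periodic case I would note that a nonzero continuous function on $\mathbb{S}$ with support a proper closed subset must vanish on the nonempty open complement, so the same $\Omega$ construction works verbatim.
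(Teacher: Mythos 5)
Your proposal is correct and follows essentially the same route as the paper: the authors likewise observe that a compactly supported solution must vanish on some non-empty open subset of $[0,T)\times X$ and then apply Theorem \ref{T1.1} directly. Your additional care about the meaning of compact support in the periodic case and about the continuity needed to produce the open set is a reasonable elaboration of the same one-line reduction.
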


{\bf Outline of the manuscript.} For the proof of Theorem \ref{T1.1} in both real and periodic settings, we will make use of two properties: existence and uniqueness of local solutions in Sobolev spaces and conservation of the $H^1$-norm of the solutions of \eqref{1.1}. Its demonstration is also based on some ideas recently proposed in \cite{linares}, as well as the use of the conservation of the Sobolev norm, as pointed out in \cite{igor}. For this reason, we firstly prove that the problem \eqref{1.1} has local solutions in Section \ref{sec2}. Then, in Section \ref{sec3} we find conserved currents for the equation in \eqref{1.1} and, in particular, from one of them we construct a conserved quantity, which implies on the conservation of the Sobolev norm $\|\cdot\|_{H^1}$ of the solutions. Then, in Section \ref{sec4} we prove our main results. In Section \ref{sec5} we present our final comments.

{\bf Notation.} Throughout this paper, $\|\cdot\|_{H^s}$ denotes either the norm in $H^s(\mathbb{R})$ or $H^s(\mathbb{S})$. The same convention is employed for other norms used in this work. 

\section{Local existence and uniqueness of solutions and conserved quantities}\label{sec2}

Here we establish the local existence and uniqueness of solutions of the problem \eqref{1.1}, in both real and periodic cases. For the real case this result can be inferred from \cite[Theorem 1.2]{priscila}. However, for sake of completeness we present here a unified proof for both cases simultaneously. 

We begin noticing that if we define $F(u)=-\p_x\Lambda^{-2} f(u)$, then the problem \eqref{1.1} is equivalent to
\begin{align}\label{2.1}
\left\{\ba{l}
u_t=-F(u),\\
\\
u(0)=u_0,
\ea
\right.
\end{align}
where $\Lambda^{-2}\phi=g\ast\phi$, $\ast$ denotes the convolution operation, while
\begin{align}\label{2.2}
g(x)=\left\{\ba{ll}
\ds{\f{e^{-|x|}}{2}},&\text{if}\,\,X=\R,\\
\\
\ds{\frac{\cosh(x-\lfloor x \rfloor - 1/2)}{2\sinh(1/2)}}, & \text{if}\,\,X=\mathbb{S}.
\ea
\right.
\end{align}
In \eqref{2.2}, $\lfloor x\rfloor$ denotes the greatest integer function. Our first result is given by:

\begin{lemma}\label{lema2.1}
Assume that $f\in C^\infty(\R)$ and $f(x)=0$ if and only if $x=0$. If $u,v\in H^s(X)$, $s>3/2$, then $f(u)\in H^s(X)$ and $\|f(u)-f(v)\|_{H^s}\leq k\|u-v\|_{H^s}$, where $k$ is a constant depending only on $\|u\|_{L^\infty}$ and $\|v\|_{L^\infty}$. In particular, $\|f(u)\|_{H^s}\leq k\|u\|_{H^s}$. 
\end{lemma}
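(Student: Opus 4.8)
The plan is to reduce the statement to two classical facts about the Sobolev spaces $H^s(X)$ with $s>3/2$: first, that $H^s(X)$ is a Banach algebra and embeds continuously in $L^\infty(X)$; and second, that $H^s(X)$ is a ring of functions on which $C^\infty$ functions vanishing at the origin act by composition in a locally Lipschitz way. Since $f(0)=0$, I can write $f(u) = u\, h(u)$ where $h(y) = \int_0^1 f'(ty)\,dt$, so that $h\in C^\infty(\R)$; more directly one exploits that $f(u)-f(v) = (u-v)\int_0^1 f'(v+t(u-v))\,dt$.

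First I would recall the Moser-type estimate: for $s>3/2$ and $G\in C^\infty(\R)$ with $G(0)=0$, if $w\in H^s(X)$ then $G(w)\in H^s(X)$ and $\|G(w)\|_{H^s}\le C(\|w\|_{L^\infty})\|w\|_{H^s}$, where the constant depends on $\sup_{|y|\le\|w\|_{L^\infty}}$ of finitely many derivatives of $G$. Applying this with $G=f$ and $w=u$ immediately gives $f(u)\in H^s(X)$ together with the bound $\|f(u)\|_{H^s}\le k\|u\|_{H^s}$ claimed in the last sentence, with $k=k(\|u\|_{L^\infty})$. The continuous embedding $H^s(X)\hookrightarrow L^\infty(X)$ for $s>1/2$ guarantees $\|u\|_{L^\infty}$ is finite and controlled by $\|u\|_{H^s}$, so these constants are meaningful.

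For the Lipschitz estimate I would use the integral remainder identity $f(u)-f(v)=(u-v)\,R$, where $R:=\int_0^1 f'(v+t(u-v))\,dt$. Since $H^s(X)$ is an algebra for $s>3/2$, $\|f(u)-f(v)\|_{H^s}\le C\|u-v\|_{H^s}\,\|R\|_{H^s}$. It then remains to bound $\|R\|_{H^s}$: writing $R$ as a superposition of $f'\circ(v+t(u-v))$ and using that each $v+t(u-v)$ lies in $H^s(X)$ with $L^\infty$-norm at most $\max(\|u\|_{L^\infty},\|v\|_{L^\infty})$, the Moser estimate (now for $f'\in C^\infty$, not necessarily vanishing at $0$, so one uses the version $\|G(w)\|_{H^s}\le C(\|w\|_{L^\infty})(1+\|w\|_{H^s})$ or simply treats the constant function separately) yields $\|R\|_{H^s}\le k'$ depending only on $\|u\|_{L^\infty}$ and $\|v\|_{L^\infty}$ (and on $\|u\|_{H^s},\|v\|_{H^s}$, which are themselves controlled, or one absorbs the extra factor). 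Combining the two displays gives $\|f(u)-f(v)\|_{H^s}\le k\|u-v\|_{H^s}$ with $k$ as stated, and taking $v=0$ recovers the last assertion.

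The main obstacle is the careful bookkeeping of which constants depend on what: the Moser estimate naturally produces constants depending on $\|w\|_{H^s}$ as well as $\|w\|_{L^\infty}$, whereas the statement asks for dependence on the $L^\infty$-norms alone. This is reconciled by noting that in the intended application $u,v$ range over a bounded set of $H^s$, or by a more careful version of the Moser inequality in which, for the specific structure $f(u)=u\,h(u)$, the top-order term is linear in the highest derivative of $u$; I expect the paper resolves this simply by letting $k$ also depend on the (finite) Sobolev norms, which suffices for the fixed-point argument of the next section. The hypothesis that $f(x)=0$ only at $x=0$ plays no role in this lemma and is carried along only because it is a standing assumption of the paper.
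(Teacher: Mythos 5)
The paper does not actually prove this lemma: its entire proof is the citation ``see Lemmas 1, 2 and 3 in \cite{const-mol}.'' Your sketch is essentially a reconstruction of the content of those cited lemmas --- the tame algebra estimate for $H^s(X)\cap L^\infty(X)$, the Moser-type composition estimate $\|G(w)\|_{H^s}\le C(\|w\|_{L^\infty})\|w\|_{H^s}$ for smooth $G$ with $G(0)=0$, and the Lipschitz bound via the Taylor remainder identity $f(u)-f(v)=(u-v)\int_0^1 f'(v+t(u-v))\,dt$ --- so the mathematics is the same; you have simply unpacked what the paper delegates to a reference. Your closing caveat about constants is the one substantive point worth recording: the factor $\|R\|_{H^s}$ in your argument is controlled by the $H^s$ norms of $u$ and $v$, not merely their $L^\infty$ norms, and the corresponding Lipschitz lemma in \cite{const-mol} likewise carries a constant depending on the Sobolev norms. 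The statement as printed (constant depending only on $\|u\|_{L^\infty}$ and $\|v\|_{L^\infty}$) is therefore slightly stronger than what either the reference or your argument delivers, and it cannot hold as written: take $f(x)=x^2$, fix $u-v=g$ a bump function, and let $v$ oscillate with bounded amplitude but large $H^s$ norm, so that $\|(u-v)(u+v)\|_{H^s}/\|u-v\|_{H^s}$ is unbounded. This discrepancy is harmless for the intended application, since Kato's theorem only requires local Lipschitz continuity on bounded subsets of $H^s(X)$, and you were also right to observe that only $f(0)=0$, not the full hypothesis that $0$ is the unique zero of $f$, is used here.
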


\begin{proof}
See lemmas 1, 2 and 3 in \cite{const-mol}.

\end{proof}

We note that $\p_x\Lambda^{-2}=\p_xg$, where $g$ is given by \eqref{2.2}, and there exists $c>0$ such that both $\|g\|_{L^\infty}$ and $\|\p_x g\|_{L^\infty}$ are bounded from above by $c$. These facts will be used in the next result.

\begin{theorem}\label{teo2.1}
Suppose that $f$, $u$ and $v$ satisfy the conditions in Lemma \ref{lema2.1}, and $F(u)=\p_x\Lambda^2f(u)$. Additionally, assume that $\max\{\|u\|_{L^\infty},\|v\|_{L^\infty}\}<R$, for some positive constant $R$. Then there exist constants $c_1>0$ and $c_2>0$, depending only on $R$, such that $\|F(u)-F(v)\|_{H^s}\leq c_1\|u-v\|_{H^s}$ and $\|F(u)-F(v)\|_{H^{s-1}}\leq c_2\|u-v\|_{H^{s-1}}$. In particular, $\|F(u)\|_{H^s}\leq c_1\|u\|_{H^s}$.
\end{theorem}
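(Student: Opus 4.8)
The plan is to estimate $F(u)-F(v) = \p_x\Lambda^{-2}\bigl(f(u)-f(v)\bigr) = (\p_x g)\ast\bigl(f(u)-f(v)\bigr)$ directly, exploiting the convolution structure together with Lemma \ref{lema2.1}. First I would record the two elementary consequences already flagged in the excerpt: $\|g\|_{L^\infty}\le c$ and $\|\p_x g\|_{L^\infty}\le c$ for a constant $c>0$ (independent of $R$), valid in both the real and periodic settings. These bounds let me control convolution with $\p_x g$ on Sobolev spaces.

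The core step is a convolution inequality: for $\phi\in H^{s-1}(X)$, one has $\|(\p_x g)\ast\phi\|_{H^s}\le C\|\phi\|_{H^{s-1}}$, because convolving with $\p_x g$ gains one derivative (morally $\p_x\Lambda^{-2}$ is an operator of order $-1$), and similarly $\|(\p_x g)\ast\phi\|_{H^{s-1}}\le C\|\phi\|_{H^{s-1}}$ (indeed with a further gain, but this crude bound suffices). On the Fourier side this is transparent: the multiplier of $\p_x\Lambda^{-2}$ is $i\xi/(1+\xi^2)$, which is bounded, so it maps $H^{\sigma}\to H^{\sigma}$ and also $H^{\sigma-1}\to H^{\sigma}$; on $\mathbb{S}$ the same holds with Fourier series. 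I would then apply Lemma \ref{lema2.1}, which under the hypothesis $\max\{\|u\|_{L^\infty},\|v\|_{L^\infty}\}<R$ gives $\|f(u)-f(v)\|_{H^s}\le k\|u-v\|_{H^s}$ with $k=k(R)$, and likewise at the level $H^{s-1}$ (noting $s-1>1/2$, so $H^{s-1}$ is still a Banach algebra on which the Moser-type estimate of \cite{const-mol} applies, with the $L^\infty$ bound again furnished by $R$ via Sobolev embedding). Chaining the convolution inequality with the Lipschitz estimate for $f$ yields
\begin{align*}
\|F(u)-F(v)\|_{H^s} &\le C\|f(u)-f(v)\|_{H^{s-1}} \le C\,k(R)\,\|u-v\|_{H^{s-1}} \le c_1\|u-v\|_{H^s},\\
\|F(u)-F(v)\|_{H^{s-1}} &\le C\|f(u)-f(v)\|_{H^{s-1}} \le c_2\|u-v\|_{H^{s-1}},
\end{align*}
with $c_1,c_2$ depending only on $R$. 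Setting $v=0$ (legitimate since $f(0)=0$, so $F(0)=0$) gives $\|F(u)\|_{H^s}\le c_1\|u\|_{H^s}$.

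I expect the main obstacle to be purely bookkeeping: making the convolution/multiplier estimate uniform and clean across both $X=\R$ and $X=\mathbb{S}$, and being careful that the constant $k$ from Lemma \ref{lema2.1} is controlled by $R$ alone (via $\|u\|_{L^\infty},\|v\|_{L^\infty}\le C_s\|u\|_{H^{s-1}}$-type embeddings, which is why the hypothesis is stated in terms of $L^\infty$ rather than Sobolev norms). No single step is deep; the content is entirely in assembling the smoothing property of $\p_x\Lambda^{-2}$ with the Banach-algebra / composition estimates for $f$.
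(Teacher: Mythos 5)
Your proposal is correct and follows essentially the same route as the paper: both reduce the claim to the boundedness of the convolution operator $\partial_x\Lambda^{-2}$ (the paper via the $L^\infty$ bounds on $g$ and $\partial_x g$, you via the bounded multiplier $i\xi/(1+\xi^2)$) combined with the Lipschitz estimate of Lemma \ref{lema2.1}. Your Fourier-multiplier rendering is in fact more careful than the paper's one-line pointwise kernel estimate, and you rightly flag the only delicate point, namely that the composition estimate must also be available at the level $H^{s-1}$.
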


\begin{proof}
By Lemma \ref{lema2.1} we know that $f(u)\in H^s(X)$. As a consequence, $F(u)\in H^{s+1}(X)$, which is continuously embedded in both $H^{s}(X)$ and $H^{s-1}(X)$. We shall only prove that $F$ is locally Lipschitz.

From the previous comments about $g$ and $\p_xg$, we have
$$
|F(u)-F(v)|=\left|\int_X\p_x\Lambda^2(f(u)-f(v))\right|dx\leq\int_X|\p_xg||f(u)-f(v)|dx\leq c|u-v|,
$$
Finally, we observe that taking $v=0$, we have $F(v)=0$. This observation proves the last affirmation.
\end{proof}

\begin{theorem}\label{teo2.2}\textsc{(Local Well-Posedness).}
Assume that $f$ satisfies the conditions in Lemma \ref{lema2.1}. Given $u_0\in H^s(X)$, $s>3/2$, then there exists a maximal time of existence $T>0$ and a unique solution $u$ to the Cauchy problem \eqref{2.1}, with $F(u)=\p_x\Lambda^{-2}f(u)$ satisfying the initial condition $u(0,x)=u_0(x)$ such that $u\in C^0\left([0,T);H^s(X)\right)\cap C^1\left([0,T);H^{s-1}(X)\right)$. Moreover, such solution is continuously dependent on the initial data.
\end{theorem}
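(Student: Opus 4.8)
The plan is to recast the Cauchy problem \eqref{2.1} as a fixed point problem for the integral operator
\[
(\mathcal{A}u)(t) = u_0 - \int_0^t F(u(\tau))\,d\tau,
\]
and to solve it by the Banach contraction principle in a suitable complete metric space, following the classical Picard scheme for ODEs in Banach spaces. First I would fix $R>0$ with $R>\|u_0\|_{L^\infty}$ (possible by the embedding $H^s(X)\hookrightarrow L^\infty(X)$, valid since $s>3/2$), and let $c_1=c_1(R)$ be the Lipschitz constant provided by Theorem \ref{teo2.1}. For a time $T'>0$ and a radius $\rho>0$ to be chosen, consider
\[
B_{T',\rho}=\Big\{u\in C^0\big([0,T'];H^s(X)\big)\ :\ \sup_{0\le t\le T'}\|u(t)-u_0\|_{H^s}\le\rho\Big\},
\]
a closed ball in a Banach space, hence complete under the metric induced by $\sup_{t}\|\cdot\|_{H^s}$.

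The next step is to verify that, choosing $\rho$ small enough that $\|u\|_{L^\infty}<R$ for every $u\in B_{T',\rho}$, and then $T'$ small enough, the operator $\mathcal{A}$ maps $B_{T',\rho}$ into itself and is a contraction there. Both facts follow from Theorem \ref{teo2.1}: for $u\in B_{T',\rho}$ one has $\|(\mathcal{A}u)(t)-u_0\|_{H^s}\le\int_0^t\|F(u(\tau))\|_{H^s}\,d\tau\le c_1 T'(\|u_0\|_{H^s}+\rho)\le\rho$ once $T'$ is small, while continuity of $t\mapsto(\mathcal{A}u)(t)$ follows from continuity of $\tau\mapsto F(u(\tau))$; and for $u,v\in B_{T',\rho}$,
\[
\sup_{0\le t\le T'}\|(\mathcal{A}u)(t)-(\mathcal{A}v)(t)\|_{H^s}\le c_1 T'\sup_{0\le t\le T'}\|u(t)-v(t)\|_{H^s},
\]
so shrinking $T'$ further gives a genuine contraction. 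The Banach fixed point theorem then yields a unique $u\in B_{T',\rho}$ solving the integral equation, i.e. \eqref{2.1}, on $[0,T']$.

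It remains to promote this to the full statement. For the regularity $u\in C^1([0,T');H^{s-1}(X))$, I would use that $F$ maps $H^s(X)$ into $H^{s+1}(X)\hookrightarrow H^{s-1}(X)$ continuously (Lemma \ref{lema2.1} together with the smoothing of $\Lambda^{-2}$), so that $u_t=-F(u)$ is a continuous $H^{s-1}$-valued curve. Uniqueness on overlapping intervals allows one to glue local solutions and extend $u$ to a maximal interval $[0,T)$, with the standard blow-up alternative (if $T<\infty$ then $\|u(t)\|_{H^s}\to\infty$ as $t\to T^-$) implicit in the maximality. Finally, continuous dependence on $u_0$ follows from a Gronwall argument: subtracting the integral equations for solutions $u,\tilde u$ with data $u_0,\tilde u_0$ and invoking Theorem \ref{teo2.1} gives $\|u(t)-\tilde u(t)\|_{H^s}\le\|u_0-\tilde u_0\|_{H^s}+c_1\int_0^t\|u(\tau)-\tilde u(\tau)\|_{H^s}\,d\tau$, whence $\|u(t)-\tilde u(t)\|_{H^s}\le e^{c_1 t}\|u_0-\tilde u_0\|_{H^s}$.

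The main obstacle — indeed essentially the only delicate point — is the bookkeeping that keeps the $L^\infty$-norm of the iterates below the threshold $R$, so that the Lipschitz constant $c_1(R)$ remains fixed throughout the argument and in the passage to the limit; this is exactly what forces the choices of $\rho$ and $T'$ to be coupled. Everything else is routine Picard machinery, the only other point deserving a line being the verification that the fixed point, a priori only in $C^0([0,T'];H^s(X))$, does acquire the $C^1([0,T'];H^{s-1}(X))$ regularity, which is immediate from the continuity of $F$ and of $t\mapsto u(t)$.
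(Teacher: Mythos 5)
Your argument is correct, but it takes a genuinely different route from the paper: the paper's entire proof is a one-line appeal to Kato's abstract theorem for quasi-linear evolution equations \cite{kato}, using Lemma \ref{lema2.1} and Theorem \ref{teo2.1} only to verify its hypotheses, whereas you give a self-contained Picard/Banach fixed-point construction for the integral equation $u(t)=u_0-\int_0^t F(u(\tau))\,d\tau$. Your approach is arguably the more natural one here: since Theorem \ref{teo2.1} makes $F$ locally Lipschitz from $H^s(X)$ to itself, the problem \eqref{2.1} is an honest ODE in the Banach space $H^s(X)$, and the classical Picard--Lindel\"of scheme applies directly without the machinery of quasi-linear semigroup theory; it also makes the continuous dependence estimate $\|u(t)-\tilde u(t)\|_{H^s}\le e^{c_1t}\|u_0-\tilde u_0\|_{H^s}$ explicit, which the citation to Kato leaves implicit. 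The price is that you must do the bookkeeping yourself --- in particular the coupling of $\rho$ and $T'$ so that all iterates stay below the $L^\infty$-threshold $R$ (note that $\|u(t)\|_{L^\infty}\le\|u_0\|_{L^\infty}+C_s\rho$ by the embedding $H^s\hookrightarrow L^\infty$, which is exactly what makes your choice of $\rho$ legitimate), and the gluing/blow-up alternative for the maximal interval --- all of which you handle or correctly flag. What the paper's route buys is brevity and a statement of well-posedness in the precise class $C^0([0,T);H^s)\cap C^1([0,T);H^{s-1})$ straight from the cited theorem; what yours buys is elementarity and transparency. Both are valid proofs of the statement.
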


\begin{proof}
Lemma \ref{lema2.1} and Theorem \ref{teo2.1} show that the problem \eqref{2.1} satisfies the conditions of Kato's Theorem \cite[Theorem 6]{kato}, which concludes the demonstration.
\end{proof}

\section{Conserved currents and conserved quantities}\label{sec3}

We recall that given a differential equation $F=0$, with independent variables $t,x$ and dependent variable $u$, a pair $(C^0,C^1)$, depending on $t,x,u$ and derivatives of $u$, is said to be a {\it conserved current} for the equation if $\p_tC^0+\p_xC^1$ vanishes on the solutions of the equation. The divergence $\p_tC^0+\p_xC^1=0$ is called {\it conservation law} for the equation.

It is well known \cite[page 266]{olverbook} that if $\p_tC^0+\p_xC^1=0$ on the solutions of $F=0$, then there exists a function $Q$, also depending on $t,x$, $u$ and its derivatives, such that
\bb\label{3.1}
\p_tC^0+\p_xC^1=QF.
\ee

The function $Q$ is called characteristic of the conservation law and it can be founded noticing that ${\cal E}_u(QF)=0$, where ${\cal E}_u$ is the Euler-Lagrange operator, see \cite[Theorem 4.7]{olverbook} for further details.

\begin{theorem}\label{teo3.1}
Assume that $Q=Q(u,u_t,u_x,u_{tt},u_{tx},u_{xx})$ is a characteristic of the equation in \eqref{1.1}. Then $Q=c_1+c_2u+c_3(f(u)+u_{tx})$, where $c_1$, $c_2$ and $c_3$ are arbitrary constants.
\end{theorem}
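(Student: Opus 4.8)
The plan is to apply the standard characterization of conservation-law characteristics via the Euler–Lagrange (variational) operator. By the discussion preceding the statement, $Q$ is a characteristic of $F := u_t - u_{txx} + \partial_x f(u) = 0$ if and only if $\mathcal{E}_u(QF) = 0$ identically, where $\mathcal{E}_u$ is the Euler–Lagrange operator in the variable $u$, with $t,x$ as independent variables. So first I would write out $QF$ explicitly with $Q = Q(u, u_t, u_x, u_{tt}, u_{tx}, u_{xx})$, substitute $F = u_t - u_{txx} + f'(u)u_x$, and then apply $\mathcal{E}_u$, which here reads $\mathcal{E}_u = \partial_u - D_t\partial_{u_t} - D_x\partial_{u_x} + D_t^2\partial_{u_{tt}} + D_tD_x\partial_{u_{tx}} + D_x^2\partial_{u_{xx}} - \dots$, truncated consistently with the jet order of $QF$ (which involves derivatives of $u$ up to third order, since $Q$ has second-order arguments and $F$ has the term $u_{txx}$).

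Next I would expand $\mathcal{E}_u(QF) = 0$ using the total derivatives $D_t, D_x$ and collect the result as a polynomial in the highest-order jet coordinates — the derivatives of $u$ of order $\geq 3$, and then order $2$ — that do not appear in the arguments of $Q$. Since the identity must hold for all values of these independent jet coordinates, each coefficient must vanish separately. The top-order terms (coefficients of things like $u_{ttxx}$, $u_{txxx}$, $u_{tttx}$, and products of third-order derivatives) should force $Q$ to depend on its second-order arguments $u_{tt}, u_{tx}, u_{xx}$ only in a very restricted, at most affine way, and in fact tie that dependence to the combination $f(u) + u_{tx}$ via the precise structure of $F$; continuing down in order, the remaining equations should pin the first-order dependence down to $c_1 + c_2 u$ plus the contribution already accounted for by the $f(u)+u_{tx}$ term. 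Conversely, one checks directly that each of $Q = 1$, $Q = u$, and $Q = f(u) + u_{tx}$ is indeed a characteristic: for $Q=1$ one has $F = D_t(u - u_{xx}) + D_x f(u)$; for $Q = u$ one exhibits $C^0, C^1$ with $u F = D_t C^0 + D_x C^1$ (multiplying through and integrating by parts in the $x$-derivatives); and for $Q = f(u) + u_{tx}$ one similarly produces a conserved current — this last one is the characteristic yielding the $H^1$-conservation used later.

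The main obstacle is purely computational bookkeeping: the expansion of $\mathcal{E}_u(QF)$ with $Q$ carrying six second-order arguments and $F$ being third order produces a large number of monomials in the jet variables, and one must be careful to (i) truncate the Euler operator at the correct order, (ii) correctly apply the Leibniz rule for $D_t$ and $D_x$ through $Q$'s dependence on all of $u, u_t, u_x, u_{tt}, u_{tx}, u_{xx}$, and (iii) organize the resulting determining equations so that the hierarchy of coefficient-vanishing conditions can be solved in the right order. No single step is deep; the care lies in not dropping terms and in recognizing that the only way the $u_{tx}$-dependence survives is bundled with $f(u)$, which is exactly what the structure $-u_{txx} + \partial_x f(u) = -D_x(u_{tx} \!-\! (-f(u))) \cdot(\text{sign})$, i.e. $\partial_x(f(u) - u_{tx})$... — more precisely, the appearance of $u_{txx} = D_x u_{tx}$ and $\partial_x f(u)$ together as $D_x(f(u) - u_{tx})$ in $-F$ is what makes $f(u)+u_{tx}$ (rather than either piece alone) a legitimate characteristic. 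I would present the determining-equation computation compactly, solving from the top jet order downward, and then close with the short direct verification of the three basic characteristics and linearity of the characteristic condition in $Q$.
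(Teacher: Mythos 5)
Your plan coincides with the paper's own proof: the authors likewise impose ${\cal E}_u\left[Q\left(u_t-u_{txx}+\partial_xf(u)\right)\right]=0$, derive the determining equations for $Q$, and simply state the solution without displaying the computation, so your outline is the same approach carried to the same level of detail. One small point worth noting: the paper's proof (and a direct check against the third conserved current in Corollary \ref{cor3.1}) yields the characteristic $f(u)-u_{tx}$ rather than $f(u)+u_{tx}$, so the sign in the theorem statement appears to be a typo and your direct verification step should use $Q=f(u)-u_{tx}$.
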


\begin{proof}
Let ${\cal E}_u$ be the Euler-Lagrange operator, see \cite[Def. 4.3]{olverbook} for its definition. From the condition
${\cal E}_u\left[Q\left(u_t-u_{txx}+\p_xf(u)\right)\right]=0$
we obtain a set of equations to be solved for $Q$, whose solution is $Q=c_1+c_2u+c_3(f(u)-u_{tx})$.
\end{proof}

\begin{corollary}\label{cor3.1}
The conserved current associated to the characteristic found in Theorem \ref{teo3.1} is a linear combination of the currents $(u,-u_{tx}+f(u))$, $((u^2+u_x^2)/2,-uu_{tx}+h(u))$ and $(F(u),(u_{tx}^2-u_{t}^2)/2-f(u)u_{tx}+f(u)^2/2)$, where $F'(u)=f(u)$ and $h'(u)=uf'(u)$.
\end{corollary}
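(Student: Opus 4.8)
The plan is to take the characteristic $Q = c_1 + c_2 u + c_3(f(u) - u_{tx})$ produced by Theorem \ref{teo3.1} and, for each of the three independent choices $Q = 1$, $Q = u$, $Q = f(u) - u_{tx}$, exhibit explicitly the densities $C^0$ and fluxes $C^1$ whose divergence equals $QF$ on solutions, where $F$ denotes the left-hand side $u_t - u_{txx} + \p_x f(u)$ of \eqref{1.1}. Since the map $Q \mapsto (C^0, C^1)$ is linear, it suffices to treat the three basis characteristics separately, and the general conserved current is then the corresponding linear combination with constants $c_1, c_2, c_3$.

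For $Q = 1$: multiplying $F$ by $1$ already has divergence form, $u_t - u_{txx} + \p_x f(u) = \p_t(u) + \p_x(-u_{tx} + f(u))$, giving the current $(u, -u_{tx} + f(u))$. For $Q = u$: I would compute $u(u_t - u_{txx} + \p_x f(u))$ and reorganize it as $\p_t C^0 + \p_x C^1$; the term $u u_t = \p_t(u^2/2)$, the term $u\, \p_x f(u) = \p_x(h(u))$ after integrating by parts using $h'(u) = u f'(u)$ (the leftover $-f(u) u_x$ from the by-parts step must be absorbed — this is where one checks the bookkeeping), and the term $-u u_{txx}$ is handled by $-u u_{txx} = -\p_x(u u_{tx}) + u_x u_{tx} = -\p_x(u u_{tx}) + \p_t(u_x^2/2)$. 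Collecting gives $((u^2 + u_x^2)/2,\ -u u_{tx} + h(u))$. For $Q = f(u) - u_{tx}$: since $F = u_t + \p_x(f(u) - u_{tx}) - [\p_x f(u) \text{ vs } f'(u) u_x \text{ matching}]$, more cleanly I would write $F = u_t - u_{txx} + f'(u)u_x$ and note $(f(u) - u_{tx}) F$ splits as $(f(u) - u_{tx}) u_t$ plus $(f(u) - u_{tx})\p_x(f(u) - u_{tx})$; the second piece is $\p_x\big((f(u) - u_{tx})^2/2\big)$, and the first piece, $(f(u) - u_{tx}) u_t = f(u) u_t - u_{tx} u_t$, becomes $\p_t(F(u)) + \p_x(\cdots)$ using $F'(u) = f(u)$ for the first summand and $-u_{tx} u_t = -\p_x(u_t^2/2)$ for the second. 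Expanding $(f(u) - u_{tx})^2/2 = f(u)^2/2 - f(u) u_{tx} + u_{tx}^2/2$ and combining with $-u_t^2/2$ reproduces the stated flux $(u_{tx}^2 - u_t^2)/2 - f(u) u_{tx} + f(u)^2/2$, with density $F(u)$.

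The only genuine obstacle is the $Q=u$ and $Q = f(u)-u_{tx}$ cases, where one must be careful that every non-divergence leftover from integration by parts (terms like $f(u) u_x$, $u_x u_{tx}$) is itself a pure $x$- or $t$-derivative and nothing is left over; this is a short verification rather than a real difficulty, and it is exactly what forces the hypotheses $h'(u) = u f'(u)$ and $F'(u) = f(u)$. I would present the three computations compactly, each as a single displayed identity of the form $QF = \p_t C^0 + \p_x C^1$, and then invoke linearity to conclude.
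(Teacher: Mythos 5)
Your proposal is correct and takes essentially the same approach as the paper: the paper's proof just displays the identity $QF=\partial_t C^0+\partial_x C^1$ as a linear combination over the three basis characteristics, and your term-by-term verification (using $h'(u)=uf'(u)$, $F'(u)=f(u)$, and $-uu_{txx}=-\partial_x(uu_{tx})+\partial_t(u_x^2/2)$) is precisely the computation behind that display. Note only that your sign choice $Q=f(u)-u_{tx}$ is the one that actually makes the third identity work, consistent with the proof of Theorem \ref{teo3.1}, whereas the statement of that theorem writes $f(u)+u_{tx}$.
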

\begin{proof}
Let $\mathop{\rm Div}(C^0,C^1):=\p_t C^0+\p_x C^1$. It is enough to notice that
\begin{align*}
    \left[c_1+c_2u+c_3(f(u)+u_{tx})\right]\left(u_t-u_{txx}+\p_xf(u)\right)=c_1 \mathop{\rm Div}\left(u,-u_{tx}-f(u)\right)\\
    \\
    +c_2 \mathop{\rm Div}\left(\f{u^2+u_x^2}{2},-uu_{tx}+h(u)\right)    +c_3 \mathop{\rm Div}\left(F(u),\f{u_{tx}^2-u_t^2}{2}-f(u)u_{tx}+\f{f(u)^2}{2}\right).
\end{align*}
\end{proof}

We recall that if $\p_tC^0+\p_xC^1=0$ is a conservation law of an equation $F=0$, with independent variables $t$ and $x$, then $$
\f{d}{dt}\int_{\Omega}C^0dx=-C^1\Big|_{\p\Omega},
$$
where $\Omega$ is the set where the variable $x$ varies, with boundary $\p\Omega$ (in case the set is unbounded, we assume $\pm\infty$ as boundaries). If $C^1\big|_{\p\Omega}=0$, then the quantity
\bb\label{3.2}
{\cal H}(t):=\int_{\Omega}C^0dx
\ee
is a constant because ${\cal H}'(t)=0$. The functions $C^0$ and $C^1$ are, respectively, called conserved density and conserved flux, while \eqref{3.2} is known as conserved quantity. For further details and discussions about conservation laws, see \cite{olver-1,olver-2,olver-3}.

We recall that the conserved currents for the BBM equation (eventually replacing $u\mapsto u-1$) were found in \cite{benjamin}, whereas Olver \cite{olver-1} showed that they are the unique conserved currents for the BBM equation. We conjecture that the currents in Corollary \ref{cor3.1} are the unique conserved currents for the equation in \eqref{1.1}.
\begin{theorem}\label{teo3.2}
Assume that $u_0\in H^s(X)$ is an initial data of the Cauchy problem \eqref{1.1}. Then the Sobolev norm $\|u\|_{H^1}$ is conserved.
\end{theorem}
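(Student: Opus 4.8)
The plan is to use the second conserved current exhibited in Corollary \ref{cor3.1}, namely $(C^0,C^1)=\big((u^2+u_x^2)/2,\,-uu_{tx}+h(u)\big)$ with $h'(u)=uf'(u)$, normalised so that $h(0)=0$ (adding a constant to $h$ does not affect the current since $\p_x h(u)=h'(u)u_x$). The key observation is that $\int_X C^0\,dx=\tfrac12\|u\|_{H^1}^2$, so the conserved quantity \eqref{3.2} associated with this current is exactly half the square of the $H^1$-norm; conservation of the norm will then follow from $\mathcal{H}'(t)=0$, provided the flux $C^1$ gives no contribution on $\p X$.

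First I would record the regularity at hand. By Theorem \ref{teo2.2} the solution satisfies $u\in C^0([0,T);H^s(X))\cap C^1([0,T);H^{s-1}(X))$, and since $u_t=-F(u)=-\p_x\Lambda^{-2}f(u)$ with $f(u)\in H^s(X)$ by Lemma \ref{lema2.1}, one has $\Lambda^{-2}f(u)\in H^{s+2}(X)$, hence $u_t\in H^{s+1}(X)$ and in particular $u_{tx}\in H^s(X)$. Thus $u,u_{tx}\in H^s(X)$ while $u_x\in H^{s-1}(X)$, and because $s>3/2$ these spaces embed continuously into the bounded continuous functions that vanish at $\pm\infty$ in the real case, and $H^{s-1}(X)$ is a Banach algebra. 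It follows that $uu_{tx}$ is continuous and that $h(u)$ is continuous with $h(u)\to h(0)=0$ where $u\to 0$; therefore $C^1(t,\cdot)$ is continuous, it is $1$-periodic in the case $X=\mathbb{S}$, and $C^1(t,x)\to 0$ as $|x|\to\infty$ (uniformly on compact time subintervals) in the case $X=\R$.

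Next I would apply the identity $\tfrac{d}{dt}\int_\Omega C^0\,dx=-C^1\big|_{\p\Omega}$ recalled before \eqref{3.2}. In the periodic case $\Omega$ is identified with $[0,1)$ and $-C^1\big|_{\p\Omega}=C^1(t,0)-C^1(t,1)=0$ by periodicity. In the real case $\Omega=\R$ and $-C^1\big|_{\p\Omega}=\lim_{x\to-\infty}C^1(t,x)-\lim_{x\to+\infty}C^1(t,x)=0$ by the decay noted above. In either case $\mathcal{H}(t)=\tfrac12\|u(t)\|_{H^1}^2$ satisfies $\mathcal{H}'(t)=0$, so $\|u(t)\|_{H^1}=\|u_0\|_{H^1}$ for all $t\in[0,T)$.

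The only delicate point is the rigorous justification of the pointwise conservation law $\p_tC^0+\p_xC^1=0$ and of the interchange of derivative and integral: for $s$ near $3/2$ the quantities involved are only as smooth as $H^{s-1}$, so the manipulations are, strictly speaking, formal. I expect this to be the main (mild) obstacle, and I would resolve it in the standard way — verify the computation directly for smooth solutions, where all integrations by parts are legitimate and the boundary terms are controlled exactly as above, and then pass to the limit for general $u_0\in H^s(X)$ using the continuous dependence on the initial data from Theorem \ref{teo2.2} together with the continuity of $v\mapsto\|v\|_{H^1}$. Equivalently, and more directly, one multiplies \eqref{1.1} by $u$ and integrates over $X$, using $\int_X uu_t\,dx=\tfrac12\tfrac{d}{dt}\|u\|_{L^2}^2$, $-\int_X uu_{txx}\,dx=\int_X u_xu_{tx}\,dx=\tfrac12\tfrac{d}{dt}\|u_x\|_{L^2}^2$, and $\int_X u\,\p_xf(u)\,dx=\int_X\p_xh(u)\,dx=0$; this is the same identity written out, and yields $\tfrac{d}{dt}\|u\|_{H^1}^2=0$.
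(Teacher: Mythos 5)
Your proposal is correct and follows essentially the same route as the paper: the paper's proof simply sets $\mathcal{H}(t)=\frac12\int_X(u^2+u_x^2)\,dx$, asserts $\mathcal{H}'(t)=0$ via the conserved current $\bigl((u^2+u_x^2)/2,\,-uu_{tx}+h(u)\bigr)$ from Corollary \ref{cor3.1} and the flux-vanishing identity recalled before \eqref{3.2}, and concludes. Your write-up supplies the regularity and boundary-decay details that the paper leaves implicit, but the underlying argument is the same.
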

\begin{proof}
Let
\begin{align*}
{\cal H}(t)=\f{1}{2}\int_{X}\left[u(t,x)^2+u_x(t,x)^2\right]dx.
\end{align*}
Since ${\cal H}'(t)=0$, then ${\cal H}(t)=const$. Such a constant can be determined using the initial data, that is,
$$
{\cal H}(0)=\f{1}{2}\int_{X}\left[u(0,x)^2+u_x(0,x)^2\right]dx=\f{1}{2}\int_{X}\left[u_0(x)^2+(u_0(x)')^2\right]dx.
$$
The result is then a consequence of the fact that ${\cal H}(t)=\|u\|_{H^1}^2/2$.
\end{proof}

\begin{corollary}\label{cor3.2}
Given $u_0\in H^s(X)$, with $s>3/2$, let $u$ be the corresponding unique solution of \eqref{1.1} with lifespan $T>0$. If there exists $t_0\in[0,T)$ such that $u\Big|_{\{t_0\}\times X} \equiv 0$, then $u \equiv 0$ in $[0,T)\times X$.
\end{corollary}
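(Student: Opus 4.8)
The plan is to read the conclusion straight off the conservation of the $H^1$-norm proved in Theorem~\ref{teo3.2}, so that the corollary becomes essentially immediate. The only ingredients are local well-posedness (to have a bona fide solution on $[0,T)$) and the fact that $t\mapsto\|u(t,\cdot)\|_{H^1}$ is a constant of the motion.

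First I would invoke Theorem~\ref{teo2.2} to fix the unique solution $u\in C^0([0,T);H^s(X))\cap C^1([0,T);H^{s-1}(X))$ emanating from $u_0$, and then Theorem~\ref{teo3.2} to conclude that $t\mapsto\|u(t,\cdot)\|_{H^1}$ is constant on the whole interval $[0,T)$. Next, since $t_0\in[0,T)$ and $u\big|_{\{t_0\}\times X}\equiv 0$ by hypothesis, the value of this constant is $\|u(t_0,\cdot)\|_{H^1}=0$; hence $\|u(t,\cdot)\|_{H^1}=0$ for every $t\in[0,T)$. Finally, for each fixed $t$ this forces $u(t,\cdot)=0$ as an element of $H^1(X)$, and since $u(t,\cdot)\in H^s(X)$ with $s>3/2$ is in particular continuous (Sobolev embedding in one space dimension), we get $u(t,x)=0$ for every $x\in X$. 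As $t\in[0,T)$ was arbitrary, $u\equiv 0$ on $[0,T)\times X$.

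The one place that genuinely needs attention --- really the only content beyond bookkeeping --- is making sure Theorem~\ref{teo3.2} applies on the entire lifespan, i.e.\ that the conserved flux boundary term vanishes so that $\mathcal{H}(t)=\frac{1}{2}\int_X (u^2+u_x^2)\,dx$ is truly conserved and equals $\|u(t,\cdot)\|_{H^1}^2/2$. This is automatic in the periodic case $X=\mathbb{S}$, and in the real case it follows from the decay of $u(t,\cdot)$ and $u_x(t,\cdot)$ at $\pm\infty$ guaranteed by $u(t,\cdot)\in H^s(\R)$ with $s>3/2$. Once that is in hand there is no further obstacle, and the corollary is a one-line consequence. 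Note also that this argument subsumes the naive alternative of restarting the Cauchy problem at $t_0$ with zero data and appealing to uniqueness forward in time, since conservation of $\|u\|_{H^1}$ pins the solution down for all $t\in[0,T)$ at once, not merely for $t\geq t_0$; no hypothesis on the sign or zero set of $f$ is needed here, only at the later stage where $\Omega$ is merely an open set rather than a whole time slice.
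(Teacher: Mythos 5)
Your proposal is correct and follows essentially the same route as the paper: conservation of the $H^1$-norm from Theorem~\ref{teo3.2} forces $\|u(t)\|_{H^1}=\|u(t_0)\|_{H^1}=0$ for all $t\in[0,T)$, and the Sobolev embedding $H^1(X)\hookrightarrow L^\infty(X)$ then gives $u\equiv 0$. Your extra care about the vanishing of the conserved flux at the boundary (automatic for $X=\mathbb{S}$, decay at infinity for $X=\R$) is a point the paper passes over silently, but it does not change the argument.
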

\begin{proof}
We will proceed with the proof for $X=\R$, while the periodic setting is treated analogously. Suppose $t_0\in[0,T)$ is such that $u\big|_{\{t_0\}\times \R} \equiv 0$. Then
$\Vert u(t_0)\Vert^2_{H^1} = 0.$
On the other hand, since the $H ^1(\R)$-norm is conserved, we know that $0=\Vert u(t_0)\Vert_{H^1} = \Vert u(t)\Vert_{H^1}$ for every $t\in[0,T)$. From the Sobolev Embedding Theorem \cite[page 317]{taylor}, we conclude that $\Vert u(t)\Vert_{L^{\infty}} = 0$, which is the same as saying that $u\equiv 0$ in $[0,T)\times \R$.
\end{proof}

\section{Proof of Theorem \ref{T1.1}}\label{sec4}

From now on, suppose that $f$ is differentiable, does not change sign and $f(v)=0$ if and only if $v=0$. The two main ingredients for the proof of Theorem \ref{T1.1} is the local well-posedness established by Theorem \ref{teo2.2} and conservation of the $H^1$ norm given in Theorem \ref{teo3.2}. This is due to the fact that the former guarantees the uniqueness of solution, while the conservation law allows to extend the result for any given time while dealing only with a localization.

Suppose there exists an open set $\Omega \subset [0,T)\times X$ such that $u(t,x) = 0$ for every $(t,x)\in\Omega$. Then there exist $t_0\in[0,T)$ and real numbers $a<b$ such that 
\begin{align}\label{E2.4}
\{t_0\}\times[a,b]\subset \Omega.
\end{align}
Observe that if $\{t_0\}\times X\subset \Omega$, then Corollary \ref{cor3.2} tells that $u\equiv 0$ in $[0,T)\times X$ and Theorem \ref{T1.1} is proved. Conversely, assume that $\{t_0\}\times X\subsetneqq \Omega$ and we will now show that the same result holds, finishing the proof of Theorem \ref{T1.1}.

Our strategy is essentially geometric and can be summarised as the follows:
\begin{itemize}
    \item {\bf Real case}: We firstly show that if the conditions in Theorem \ref{T1.1} hold, then we can find a vertical straight line segment \eqref{E2.4} contained in $[0,T)\times\R$ such that the function $x\mapsto (1-\partial_x^2)^{-1}\partial_x f(u(t_0,x))$ vanishes. Then we use the non-locality of the convolution to show that the function $f(u)$ vanishes on the whole line containing such segment;
    \item {\bf Periodic case}: Similarly to the real case, here can identify \eqref{E2.4} with an arc contained in a circle and we will show that the function $x\mapsto (1-\partial_x^2)^{-1}\partial_x f(u(t_0,x))$ vanishes there and, again because of the convolution, this property holds for the whole circle containing that arc.
\end{itemize}

Our goal is then achieved invoking the conservation of the Sobolev norm of the solutions to extend the result to the whole domain of the solution.

\begin{proposition}\label{P2.2} Let $u_0\in H^4(X)$ be a solution of \eqref{1.1} and $u$ its corresponding solution, given by Theorem \ref{2.2}. 
For $t_0, a $ and $b$ given as in \eqref{E2.4} and $F(x) := (1-\partial_x^2)^{-1}\partial_x f(u)(t_0,x)$, we have $F\Big|_{[a,b]}\equiv 0$ on the solutions of \eqref{1.1}.
\end{proposition}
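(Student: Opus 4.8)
The plan is to recognise the function $F$ in the statement as (up to sign) the time derivative of $u$ at $t_0$, and then to exploit that $u$ vanishes not merely on the segment \eqref{E2.4} but on an \emph{open} set containing it.

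First I would rewrite the equation in \eqref{1.1}. Since $u_t-u_{txx}=(1-\partial_x^2)u_t$, the equation in \eqref{1.1} reads $(1-\partial_x^2)u_t=-\partial_x f(u)$. By Lemma \ref{lema2.1}, $f(u(t_0,\cdot))\in H^s(X)$, so the operator $(1-\partial_x^2)^{-1}=\Lambda^{-2}$ (convolution with the kernel $g$ of \eqref{2.2}) may be applied to both sides, giving
\[
u_t=-(1-\partial_x^2)^{-1}\partial_x f(u),
\]
which is precisely the reformulation \eqref{2.1}. Evaluating at $t=t_0$ yields $F(x)=-u_t(t_0,x)$ for every $x\in X$.

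Next I would use the openness of $\Omega$. Because $u_0\in H^4(X)$, Theorem \ref{teo2.2} gives $u\in C^0([0,T);H^4(X))\cap C^1([0,T);H^3(X))$, so $u$ and $u_t$ are classical functions of $x$ (smooth enough by the Sobolev embedding) and $u_t$ is continuous in $t$; in particular the pointwise partial derivative $u_t$ exists everywhere on $[0,T)\times X$. Since $u\equiv 0$ on the open set $\Omega$, the function $u$ is locally constant there, hence $u_t\equiv 0$ on $\Omega$ (using the one-sided derivative at $t=0$ in case $\Omega$ meets $\{0\}\times X$). By \eqref{E2.4}, $\{t_0\}\times[a,b]\subset\Omega$, so $u_t(t_0,x)=0$ for all $x\in[a,b]$, and therefore $F\big|_{[a,b]}=-u_t(t_0,\cdot)\big|_{[a,b]}\equiv 0$.

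I do not expect a serious obstacle here: the content of the proposition is the observation that the nonlocal term $(1-\partial_x^2)^{-1}\partial_x f(u)$ coincides with $-u_t$, which necessarily vanishes wherever $u$ vanishes on an open set. The only points requiring a little care are (i) checking that $f(u(t_0,\cdot))$ lies in a Sobolev space on which $\Lambda^{-2}$ acts, which is exactly Lemma \ref{lema2.1}, and (ii) the elementary regularity bookkeeping that allows us to differentiate $u$ classically on $\Omega$, supplied by Theorem \ref{teo2.2} together with the Sobolev embedding. The genuinely new step — propagating the vanishing of $F$ from the segment $[a,b]$ to the whole line (or circle) via the nonlocality of the convolution — is carried out after this proposition.
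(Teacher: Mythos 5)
Your proposal is correct and follows essentially the same route as the paper: identify $F$ with $-u_t(t_0,\cdot)$ via the nonlocal reformulation \eqref{2.1}, then conclude that $u_t$ vanishes on $[a,b]$ because $u$ vanishes on the open set $\Omega\supset\{t_0\}\times[a,b]$. Your version is slightly more careful than the paper's (which simply writes $-u_t(t_0,x)=0$ without remarking that the openness of $\Omega$ is what makes the $t$-derivative vanish), but the argument is the same.
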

\begin{proof}

From \eqref{2.1} we have $(1-\partial_x^2)^{-1}\partial_x f(u) = -u_t$. 
Since $u$ vanishes on $\Omega$, evaluating the expression for the BBM equation for $t=t_0$ yields
$$F(x) = (1-\partial_x^2)^{-1}\partial_x f(u)(t_0,x) = -u_t(t_0,x) = 0.$$
Therefore, $F(x)=0$ for every $x\in[a,b]$.
\end{proof}

Observe now that $(\Lambda^{-2}f)(x)=g\ast f$, where $g$ is given by \eqref{2.2}.

Since $f$ does not change sign, we know that $(\Lambda^{-2}f)(x)$ also does not change sign. We might assume that $(\Lambda^{-2}f)(x)\geq 0$ and the non-positive case is treated similarly. Moreover, we have that $\Lambda^{-2}f =0$ if and only if $f=0$. Under the hypothesis that $f(x)=0$ if and only if $x=0$, it is then enough to show that $f=0$ to make use of Corollary \ref{cor3.2} to conclude that $u=0$.

From the identity $\partial_x^2\Lambda^{-2} = \Lambda^{-2}-1$ and the fact that $u(t_0,x)=0$ for $x\in [a,b],$ we use Proposition \ref{P2.2} obtain
\begin{align*}
    0 = F(b)-F(a) = \int_a^b F'(x)dx = \int_a^b(\Lambda^{-2}f)(u)(t_0,x)dx.
\end{align*}
Therefore, we have that $f(u)(t_0,x) = 0$, which means that $u(t_0,x)=0$ for every $x \in X$. Because of the conservation of the $H^1$ norm, we have $\Vert u(t)\Vert_{H^1} = \Vert u(t_0)\Vert_{H^1}=0$. Similarly to what was done previously, we conclude that $\Vert u\Vert_{L^{\infty}} = 0$ and $u\equiv 0$ on $[0,T)\times X$.

\section{Discussion and conclusion}\label{sec5}

In this work we give a geometrical and simple proof that if a solution of \eqref{1.1} vanishes on an open set of its domain, where $f$ satisfies the conditions in Theorem \ref{T1.1}, then the solution vanishes everywhere.

A key result to proof Theorem \ref{T1.1} is Proposition \ref{P2.2}, which is based on \cite[Theorem 1.6]{linares}. Proposition \ref{P2.2} shows that we can construct a vertical straight line (in the real case) or a circle (in the periodic case) such that the solution of \eqref{1.1} vanishes. Then, as observed in \cite{igor}, we use the invariance of the $H^1(X)$ norm to show that the solution vanishes on its domain.

Finally, we observe that in \cite[Theorem 4.1]{rosier} the authors presented a unique continuation result for \eqref{1.1} with non-negative $f(u)$ for the periodic case. We do not know an analogous result to the real case. In this communication, not only we present a new demonstration for \cite[Theorem 4.1]{rosier}, but also we prove an analogous result to the real case.

\end{document}